\def\H{\mathbb H}
\def \h{{\bf H}_{\mathbb H}}
\def\C{\mathbb C}
\def\P{\mathbb P}
\def\Sp{\mathrm{Sp}}
\def\SU{\mathrm{SU}}
\def\SL{{\rm SL}}
\def\z{{\bf z}}
\newtheorem{theorem}{Theorem}[section]
\newtheorem{lemma}[theorem]{Lemma}
\theoremstyle{definition}
\theoremstyle{remark}
\numberwithin{equation}{section}
\theoremstyle{plain}
\newtheorem{cor}[theorem]{Corollary}
\newcommand{\secref}[1]{section~\ref{#1}}
\newcommand{\thmref}[1]{Theorem~\ref{#1}}
\newcommand{\corref}[1]{Corollary~\ref{#1}}
\newcommand{\eqnref}[1]{~{\textrm(\ref{#1})}}
\begin{document}

\title[On discreteness of subgroups of quaternionic hyperbolic isometries]{On discreteness of subgroups of quaternionic hyperbolic isometries} 
\author[K.  Gongopadhyay]{Krishnendu Gongopadhyay}
 \address{ Indian Institute of Science Education and Research (IISER) Mohali,
Knowledge City, Sector 81, SAS Nagar, Punjab 140306, India}
\email{krishnendu@iisermohali.ac.in, krishnendug@gmail.com}
\author[M. M. Mishra]{Mukund Madhav Mishra}
\address{Department of Mathematics, Hansraj College, University of Delhi, Delhi 110007, India}
\email{mukund.math@gmail.com}
\author[D. Tiwari]{Devendra Tiwari}
\address{Department of Mathematics, University of Delhi, Delhi 110007, India}
\email{devendra9.dev@gmail.com}
\subjclass[2000]{Primary 20H10; Secondary 51M10}
\keywords{hyperbolic space, J\o{}rgensen inequality, discreteness, quaternions.  }
\date{\today}
\thanks{Gongopadhyay acknowledges partial support from SERB MATRICS grant MTR/2017/000355.  \\Tiwari is supported by NBHM-SRF}
\begin{abstract}
Let $\h^n$ denote the $n$-dimensional quaternionic hyperbolic space. The linear group $\Sp(n,1)$ acts by the isometries of $\h^n$.  A subgroup $G$ of $\Sp(n,1)$ is called \emph{Zariski dense} if it does not fix a point on $\h^n \cup \partial \h^n$ and neither it preserves a totally geodesic subspace of $\h^n$. 
We prove that a Zariski dense subgroup $G$ of $\Sp(n,1)$ is discrete if for every loxodromic element $g \in G$ the two generator subgroup $\langle f, g f g^{-1} \rangle$  is discrete, where the generator $f \in \Sp(n,1)$ is certain fixed element not necessarily from $G$.

\end{abstract}
\maketitle
\section{Introduction}
The classical J\o{}rgensen inequality, see \cite{j},  gives a necessary criterion to check discreteness of a two generator subgroup of $\SL(2, \C)$ that acts by M\"obius transformations on the Riemann sphere. This has been generalized to the higher dimensional M\"obius group that acts on the $n$ dimensional real hyperbolic space, by several authors. A well-known consequence of the generalized J\o{}rgensen inequalities says that a subgroup $G$ of the M\"obius group is discrete if and only if every two generator subgroup is discrete, e.g. \cite{martin}, \cite{ah}. There have been several refinements of this result to obtain several discreteness criteria in M\"obius groups, e.g. \cite{mchen}, \cite{wlc}, \cite{gm2}. Several generalizations of the J\o{}rgensen inequality and related discreteness criteria have been obtained in further generalized setting like the complex hyperbolic space and  normed spaces, e.g. \cite{jkp}, \cite{fh}, \cite{mp2}, \cite{mp}.

Let $\H$ denote the division ring of Hamilton's quaternions. Let $\h^n$ denote the $n$-dimensional quaternionic hyperbolic space. Let $\Sp(n,1)$ be the linear group that acts on $\h^n$ by isometries.  In this paper following the above theme of research, we give discreteness criteria for a subgroup of  $\Sp(n,1)$. The arguments restrict over the complex numbers as well, and as a corollary we obtain discreteness criteria in $\SU(n,1)$. To state our main result we need the following notions. 

Recall that an element $g \in \Sp(n,1)$ is \emph{elliptic} if it has a fixed point on $\h^n$. It is \emph{parabolic}, resp. \emph{loxodromic} (or hyperbolic), if it has a unique fixed point, resp. exactly two fixed points on the boundary $\partial \h^n$. A unipotent parabolic element,  that is, a parabolic element  having all eigenvalues $1$,  is called a \emph{Heisenberg translation}.  It is well-known, see \cite{chen}, that an elliptic or loxodromic isometry $g$ is conjugate to a diagonal element in $\Sp(n,1)$.  If $g$ is elliptic, then up to conjugacy, 
\begin{equation} \label{elc} g=\mathrm{diag} (\lambda_1, \ldots, \lambda_{n+1}),\end{equation} 
where for each $i$, $|\lambda_i|=1$, and the eigenvalue $\lambda_1$ is such that the corresponding eigenvector has negative Hermitian length, while all other eigenvectors have positive Hermitian length. 
An elliptic element $g$ is called \emph{regular} if it has mutually disjoint classes of eigenvalues.  A regular elliptic element has a unique fixed point on $\h^n$. 
 If $g$ is loxodromic, then we may assume up to conjugacy, 
\begin{equation}\label{elp}
g=\mathrm{diag}(\lambda_1, \bar \lambda_1^{-1}, \lambda_3, \ldots, \lambda_{n+1}).
\end{equation}
with $|\lambda_1|>1$. 
One may associate certain conjugacy invariants to isometries as follows. 

For $g$ is elliptic, we define
\begin{equation} \label{eqq1} \delta(g)=\max\{ \ |\lambda_1-1|+|\lambda_i-1| \ : \ i=2, \ldots, n+1\}. \end{equation} 

For $g$ loxodromic, following \cite{cp1}, define the following quantities: 
$$\delta_{cp}(g)=\max\{|\lambda_i-1| \ : \ i=3, \ldots, n+1\}, \hbox{ and} $$
$$M_g=2 \delta_{cp}(g)+|\lambda_1-1|+|\bar \lambda_1^{-1}-1|.$$

Let $T_{s, \zeta}$ be a Heisenberg translation in $\Sp(n,1)$. We may assume up to conjugacy that (cf. \cite[p. 70]{chen})
\begin{equation} \label{he} 
T_{s, \zeta}=\begin{pmatrix} 1 & 0 & 0 \\ s & 1 & {\zeta}^{\ast} \\ \zeta & 0 & I \end{pmatrix}, 
\end{equation}
where $Re(s)=\frac{1}{2} |\zeta|^2$. 

\medskip  A subgroup  $G$ of $\Sp(n,1)$ is called \emph{Zariski dense} if it does not fix a point on $\h^n \cup \partial \h^n$, and neither it preserves a totally geodesic subspace of $\h^n$.  With the above notations, we prove the following theorem. 
 \begin{theorem}\label{thd} 
Let $G$ be a Zariski dense subgroup of  $\Sp(n, 1)$. 
\begin{enumerate}
\item 
Let $g \in \Sp(n, 1)$ be a regular elliptic element such that $\delta(g)<1$.  If $\langle g, hgh^{-1} \rangle$ is discrete for every loxodromic element $h \in G$, then $G$ is discrete.

\item Let $ g \in \Sp(n, 1)$ be loxodromic element such that $M_g < 1$. If $\langle g, hgh^{-1} \rangle$ is discrete for every loxodromic element $h \in G$, then $G$ is discrete.

\item Let  $g \in \Sp(n, 1)$ be a  Heisenberg translation such that $|\zeta| < \frac{1}{2}$. If $\langle g,   hgh^{-1} \rangle$ is discrete for every loxodromic $h$ in $G$, then $G$ is discrete. 
\end{enumerate} 
\end{theorem}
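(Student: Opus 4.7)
The plan is to argue by contradiction: assuming $G$ is not discrete, I will exhibit a sequence of loxodromic elements $h_k \in G$ with $h_k \to I$, transfer the hypothesis to the conjugates $g_k := h_k g h_k^{-1}$, and apply a J\o{}rgensen-type inequality to each discrete group $\langle g, g_k\rangle$ to derive a contradiction. A preliminary observation is that non-discreteness can be upgraded to topological density of $G$ in $\Sp(n,1)$: since $G$ is Zariski dense in the simple Lie group $\Sp(n,1)$ and not discrete, the identity component of the closure $\overline{G}$ is a nontrivial connected normal subgroup, hence equal to all of $\Sp(n,1)$.

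Next I would use this density to construct $\{h_k\}$ with an extra non-degeneracy property. Let $F_g \subset \h^n \cup \partial \h^n$ denote the fixed-point set of $g$: a single point in $\h^n$ in case~$(1)$, a pair of boundary points in case~$(2)$, and a single boundary point in case~$(3)$. Loxodromic elements form an open subset of $\Sp(n,1)$ that accumulates on the identity, while the stabiliser of $F_g$ is a proper closed submanifold of $\Sp(n,1)$. Combining these facts with the density of $G$, one can select distinct loxodromic $h_k \in G$ with $h_k \to I$ and $h_k(F_g) \neq F_g$ for every $k$. Setting $g_k := h_k g h_k^{-1}$ one has $g_k \to g$, and by hypothesis $\Gamma_k := \langle g, g_k\rangle$ is discrete for every $k$.

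The core step is the application of a suitable J\o{}rgensen-type inequality in each of the three cases. The existing results (the Kim--Parker / Jiang--Kim--Parker analysis for elliptic generators in \cite{jkp}, the Cao--Parker inequality \cite{cp1} for loxodromic generators, and the parabolic analogue in the quaternionic setting) all have the schematic form: \emph{if $\Gamma_k$ is discrete and non-elementary, then $\Delta(g) + E(g, g_k) \geq 1$}, where $E(g, g_k) \to 0$ as $g_k \to g$, and $\Delta(g)$ equals $\delta(g)$, $M_g$, or $2|\zeta|$ in cases~$(1)$, $(2)$, $(3)$ respectively. The three hypotheses $\delta(g) < 1$, $M_g < 1$, and $|\zeta| < \frac{1}{2}$ are precisely what is required to make this inequality fail for all sufficiently large $k$, so $\Gamma_k$ must eventually be elementary.

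Finally, elementarity of $\Gamma_k$ in each case forces $h_k$ to preserve $F_g$: a common fixed point of $g$ and $g_k = h_k g h_k^{-1}$ must itself lie in $F_g$, and its preimage under $h_k$ also lies in $F_g$, so since $F_g$ has at most two points a short case analysis yields $h_k(F_g) = F_g$, contradicting the choice made in Step~2. The principal obstacle I expect is assembling the three J\o{}rgensen-type inequalities in the exact quaternionic form required, with constants matching $\delta(g)$, $M_g$, and $|\zeta|$ as defined in the introduction; the non-commutativity of the quaternions and the necessity of working with conjugacy classes of eigenvalues rather than individual eigenvalues make this the most technically delicate part of the argument, especially in the regular elliptic case.
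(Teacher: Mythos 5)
Your overall strategy coincides with the paper's: upgrade non-discreteness to density, extract loxodromic $h_k$ tending to the identity (or to $g$), and feed the discrete groups $\langle g, h_kgh_k^{-1}\rangle$ into a J\o{}rgensen-type inequality whose left-hand side degenerates in the limit. The two serious problems are in your Step 4 and in your sourcing of the elliptic inequality. In Step 4 you assert that elementarity of $\Gamma_k=\langle g, h_kgh_k^{-1}\rangle$ forces a \emph{common fixed point} of $g$ and $h_kgh_k^{-1}$, and hence $h_k(F_g)=F_g$. But ``elementary'' here means having a finite orbit in $\overline{{\bf H}_{\H}^n}$, and for a regular elliptic $g$ with fixed point $q\in{\bf H}_{\H}^n$ this finite orbit need not be a common fixed point at all: $\Gamma_k$ could preserve a pair of boundary points $\{p_1,p_2\}$ even though $g$ and its conjugate fix distinct interior points. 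Ruling this out is the genuinely delicate part of case (1), and the paper does it by a separate argument: an invariant boundary pair forces an invariant quaternionic line $l$ through $p_1,p_2$; regularity forces $q\in l$ (else $g$ has two interior fixed points); and then $g^2|_l$ fixes $p_1$, $p_2$ and $q$, so $g^2|_l=\mathrm{diag}(\lambda,\lambda)$, giving $g$ a repeated eigenvalue class and contradicting regularity again. Nothing in your sketch supplies this, and without it the dichotomy ``non-elementary, so the inequality applies'' versus ``elementary, so $h_k$ stabilizes $F_g$'' simply does not close in the elliptic case. (A smaller instance of the same looseness occurs in case (2): an $h_k$ that fixes one of the two fixed points of $g$ but not the other does not stabilize $F_g$, yet $\langle g, h_kgh_k^{-1}\rangle$ then fixes a boundary point and is elementary; one must excise the larger closed set $\{h: h(F_g)\cap F_g\neq\emptyset\}$ when choosing $h_k$.)

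The second gap is that the inequality you invoke for the elliptic case does not exist off the shelf in the form you need. The constant in your schematic must be exactly the invariant $\delta(g)=\max\{|\lambda_1-1|+|\lambda_i-1|\}$ from the statement, and the paper has to \emph{prove} the corresponding inequality (its Theorem 3.1, a reworking of Cao--Tan with $\delta(g)$ replacing $\delta_{ct}(g)$, via the contraction estimate $|a_{1,1}^{(k+1)}|^2-1\le(|a_{1,1}^{(k)}|^2-1)|a_{1,1}^{(k)}|^2\delta^2(g)$); the complex hyperbolic results you cite do not yield it. You flag this as ``the most technically delicate part,'' which is honest, but it means a substantive component of the proof is missing rather than merely assembled. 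The loxodromic and parabolic cases are in better shape: there your argument matches the paper's use of Cao--Parker and of the Hersonsky--Paulin-style Shimizu lemma, although note that in the loxodromic case the contradiction is not of the additive form $\Delta(g)+E\ge 1$ but rather that a cross-ratio product tends to $0$ while the inequality bounds it below by $(1-M_g)/M_g^2>0$.
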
 
Restricting everything over the complex numbers,  the above theorem also holds for $\SU(n,1)$. 
We also have the following.   
\begin{cor}\label{cor1}
Let $G$ be Zariski dense in $\Sp(n, 1)$, resp. $\SU(n,1)$. 
\begin{enumerate}
\item 
Let $g \in \Sp(n, 1)$, resp. $\SU(n,1)$,  be a regular elliptic element such that $\delta(g)<1$.  If $\langle g, hgh^{-1}  \rangle$ is discrete for every regular elliptic  element $h \in G$, then $G$ is discrete.

\item Let $ g \in \Sp(n, 1)$, resp. $\SU(n,1)$,  be loxodromic element such that $M_g < 1$. If $\langle g, hgh^{-1}  \rangle$ is discrete for every regular elliptic $h \in G$,  then $G$ is discrete.

\item Let  $g \in \Sp(n, 1)$, resp. $\SU(n,1)$,  be a  Heisenberg translation such that $|\zeta| < \frac{1}{2}$. If $\langle g, hgh^{-1}  \rangle$ is discrete for every regular elliptic $h$ in $G$, then $G$ is discrete. 
\end{enumerate} 
\end{cor}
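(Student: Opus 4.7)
The plan is to adapt the proof of \thmref{thd} by replacing loxodromic conjugators with regular elliptic ones; the two arguments are structurally parallel. Suppose, for contradiction, that $G$ is non-discrete. Together with Zariski density, this forces, by a Borel-density-style argument, the topological closure $\bar G$ to contain the identity component of $\Sp(n,1)$, respectively $\SU(n,1)$, so every neighbourhood of the identity meets $G$. Since regular ellipticity of $h$ and non-elementarity of $\langle g, h g h^{-1}\rangle$ are both open, generic conditions in a neighbourhood of $I$ --- the latter using Zariski density of $G$, which prevents $G$ from stabilising the fixed-point set of $g$ --- one can select a sequence $h_m \in G$ with $h_m \to I$, each $h_m$ regular elliptic, and each $\langle g, h_m g h_m^{-1} \rangle$ non-elementary.

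By the hypothesis of the corollary, every $\langle g, h_m g h_m^{-1}\rangle$ is discrete, and by construction it is non-elementary, so the appropriate J\o{}rgensen-type inequality for $\Sp(n,1)$, resp.\ $\SU(n,1)$, applies --- this is the same inequality used in the proof of \thmref{thd}, cf.\ \cite{cp1}, \cite{mp2}, \cite{jkp}. In each of the three cases the conclusion is a uniform positive lower bound on a conjugation-invariant separation $\Phi(g, h_m g h_m^{-1})$ which vanishes when the two generators coincide: for regular elliptic $g$ with $\delta(g)<1$, $\Phi$ compares eigenvalue configurations; for loxodromic $g$ with $M_g<1$, $\Phi$ measures the distance between fixed-point sets on $\partial \h^n$; for a Heisenberg translation with $|\zeta|<\frac{1}{2}$, $\Phi$ is essentially a Heisenberg norm of the commutator.

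Letting $m \to \infty$, the conjugate $h_m g h_m^{-1}$ converges to $g$, so $\Phi(g, h_m g h_m^{-1}) \to 0$, contradicting the uniform lower bound; hence $G$ must be discrete. The principal obstacle in executing this plan is the second step above: securing topological density of $G$ near the identity from Zariski density plus non-discreteness, and then locating $h_m$ \emph{inside} the countable group $G$ satisfying both open conditions simultaneously. The first ingredient is a Borel-density-style fact; the second uses that the regular elliptic locus and the non-elementarity locus (as functions of $h$) intersect densely in any neighbourhood of $I$. Once such a sequence is in hand, the J\o{}rgensen-style contradiction is essentially identical to the corresponding step in the proof of \thmref{thd}, the only difference being the type of conjugator used.
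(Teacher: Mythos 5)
Your skeleton matches the paper's proof of \corref{cor1} exactly: assume $G$ is non-discrete, upgrade Zariski density to topological density (this is precisely \thmref{dstu1}, so the step you single out as the ``principal obstacle'' is already available and needs no separate Borel-density machinery), pick regular elliptic conjugators $h_m\to I$ lying in the open set $\mathcal E\setminus F_g$, apply the relevant J\o{}rgensen-type inequality to each discrete non-elementary group $\langle g, h_m g h_m^{-1}\rangle$, and contradict it in the limit $h_m g h_m^{-1}\to g$. Your choice of $h_m\to I$ (rather than $h_m\to g$ as in parts (2),(3) of \thmref{thd}) is the correct adaptation, since regular elliptic elements cannot accumulate at a loxodromic or parabolic $g$.

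The genuine gap is the non-elementarity step. You dispose of it by asserting that non-elementarity of $\langle g, hgh^{-1}\rangle$ is an ``open, generic'' condition in $h$; that is not a justification, and it is exactly where the paper does work. Avoiding the stabiliser $F_g$ of the fixed-point set of $g$ only guarantees that $g$ and $h_m g h_m^{-1}$ have no common fixed point; two such isometries could still a priori generate an elementary group, for instance by sharing an invariant two-point set on the boundary or a common totally geodesic subspace. The paper rules this out concretely: for part (1) it uses that a regular elliptic element has a unique fixed point in $\h^n$, so a finite orbit would force a second fixed point (and in the analogous step of \thmref{thd}(1) this requires a further argument about invariant quaternionic lines and eigenvalue multiplicities); for parts (2) and (3) it uses that $h_m\notin F_g$ precludes a common invariant point pair. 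A second, smaller inaccuracy: your description of the inequalities as bounding below a ``separation $\Phi$ which vanishes when the generators coincide'' fits the loxodromic case (where the cross-ratio quantity $|b_nc_n|\to 0$), but not the elliptic and Heisenberg cases, where the left-hand sides tend to $\delta(g)$ and $2|\zeta|$ respectively; the contradiction there comes from the hypotheses $\delta(g)<1$ and $|\zeta|<\tfrac{1}{2}$, not from the quantity tending to zero.
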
 

Note that the above results show that the discreteness of a Zariski dense subgroup $G$ of $\Sp(n,1)$, or $\SU(n,1)$, is determined by the two generator subgroups 
$\langle g, hgh^{-1} \rangle$, where  $h \in G$, but the generator $g$ is fixed and need not be an element from $G$, and also it is enough to take $h$ to be loxodromic or regular elliptic. After fixing such a `test map' $g$, conjugates of $g$ by generic elements of $G$  determine the discreteness. For  isometries of the real hyperbolic space, similar discreteness criteria using a test map and conjugates of it, have been obtained in \cite{yang2}, \cite[Theorem 1.2]{gm2} and \cite{gm}.  \thmref{thd} and \corref{cor1} generalize these works in $\Sp(n,1)$ and $\SU(n,1)$ respectively. 

\medskip We have noted down some preliminary notions in \secref{prel}. The main result has been proved in \secref{pfthd}.  To prove the results, we shall use some  generalized  J\o{}rgensen inequalities in $\Sp(n,1)$. We shall  use the J\o{}rgensen inequality of Cao and Parker \cite{cp1} for dealing with the subgroups having a loxodromic generator. For the subgroups having a unipotent parabolic generator, we shall follow a quaternionic version of  the Shimizu's lemma following Hersonsky and Paulin \cite{hp}. To deal with the subgroups having a regular elliptic generator, we shall use a variation of the inequality of Cao and Tan in \cite{ct1}.  In this case, we have introduced a new conjugacy invariant  $\delta(g)$ as given above. This invariant $\delta(g)$ is different from the conjugacy invariant $\delta_{ct}(g)$ used by Cao and Tan. The invariant $\delta(g)$ may be considered as a restriction of the Cao-Parker invariant $\delta_{cp}(g)$ to the subgroups having at least one generator elliptic. This new invariant also gives quantitatively better bound in a larger domain.   We refer to \secref{ctjo} for more details. 

\section{Preliminaries} \label{prel}
\subsection{The quaternionic hyperbolic space} 
We begin with some background
material on quaternionic hyperbolic geometry. Much of this can be
found in \cite{chen,kp}.

Let $\H^{n,1}$ be the right vector space over $\H$ of quaternionic dimension $(n+1)$  (so real dimension $4n+4$) equipped with the quaternionic Hermitian form for $z=(z_0,...,z_n), \ w=(w_0,...,w_n)$, 
$$\langle z, w \rangle=-( \bar z_0 w_1 +  \bar z_1 w_0) + \Sigma_{i=2}^{n} \bar z_i w_i. $$
Thus the Hermitian form is defind by the matrix 
$$J_2=\begin{pmatrix} 0 & -1 & 0 \\ -1 & 0 & 0 \\ 0 & 0 & I_{n-2} \end{pmatrix}.$$ 

Equivalently, one may also use the Hermitian form given by the following matrix wherever convenient. 
$$J_1=\begin{pmatrix} -1 &0 \\ 0 & I_{n} \end{pmatrix}.$$ 

Following Section 2 of \cite{chen}, let
\begin{eqnarray*}
V_0 =  \Bigl\{{\bf z} \in  \H^{n,1}-\{0\}:
\langle{\bf z},\,{\bf z}\rangle=0\Bigr\},\,\,
V_{-} = \Bigl\{{\bf z} \in \H^{n,1}:\langle{\bf z},\,{\bf
z}\rangle<0\Bigr\}.
\end{eqnarray*}
It is obvious that $V_0$ and $V_{-}$ are invariant under ${\rm Sp}(n,1)$.
We define an equivalence relation $\sim$ on $\H^{n,1}$ by
${\bf z}\sim{\bf w}$ if and only if there exists a non-zero
quaternion $\lambda$ so that ${\bf w}={\bf z}\lambda$. Let $[{\bf z}]$ denote
the equivalence class of ${\bf z}$. Let
$\P:\H^{n,1}-\{0\}\longrightarrow \H\P^n$ be the {\sl right projection} map given by $\P:{\bf z}\longmapsto z$, where $z=[{\bf z}]$.
The $n$ dimensional {quaternionic hyperbolic space} is defined to be
${\bf H}_{\H}^n=\P(V_-)$ with boundary $\partial {\bf H}_{\H}^n=\P(V_0)$.

\medskip In the model using $J_2$, there are two distinct points $0$ and $\infty$ on $\partial \h^n$.  For $z_1 \neq 0$,  the projection map $\P$ is given by 
$$\P(z_1, z_2, \ldots, z_{n+1})=(z_2z_1^{-1}, \ldots, z_{n+1}z_1^{-1}), $$
and  accordingly we choose boundary points 
\begin{equation}\label{infty}
\P(0, 1, \,\ldots,\,0,0)^t=0.
\end{equation}
\begin{equation}
\P(1, 0, \,\ldots,\,0,0)^t=\infty.
\end{equation}
In the model using $J_1$, we mark $\P(1, 0, \,\ldots,\,0,0)^t$ as the origin $0=(0,0, \ldots, 0)^t$ of the quaternionic hyperbolic ball.  The Bergmann metric on
$\h^n$ is given by the distance formula
$$
\cosh^2\frac{\rho(z,w)}{2}=
\frac{\langle{\bf z},\,{\bf w}\rangle \langle{\bf w},\,{\bf z}\rangle}
{\langle{\bf z},\,{\bf z}\rangle \langle{\bf w},\,{\bf w}\rangle},
\ \ \text{where}\ \ z,w \in \h^n, \ \
{\bf z}\in \P^{-1}(z),{\bf w}\in \P^{-1}(w). 
$$
The above forumula is independent of the choice of ${\bf z}$ and ${\bf w}$. 

Now consider the non-compact linear Lie group   
$$\Sp(n, 1) = \{A \in {\rm GL}(n + 1, \H) : A^*J_{i}A = J_{i}\}.$$
An element  $g\in{\rm Sp}(n,1)$ acts on
$\overline \h^n={\bf H}_\H^n\cup\partial \h^n $ as $
g(z)=\P g\P^{-1}(z)$. Thus the isometry group of $\h^n$ is given by 
${\rm PSp}(n,1)=\Sp(n,1)/\{I, -I\}.$

\subsection{Cao-Parker Inequality} 
Recall that the quaternionic cross ratio of four distinct points $z_1, z_2, z_3, z_4$ on $\partial \h^n$ is defined as:
$$[z_1, z_2, z_3, z_4]=\langle \z_3, \z_1 \rangle \langle \z_3, \z_2 \rangle^{-1} \langle \z_4, \z_2 \rangle \langle \z_4, \z_1 \rangle^{-1},$$
where $\z_i$ denote the lift to $\H^{n+1}$ of a point $z_i$ on $\partial \h^n$. 
We note the following lemma concerning cross ratios. 
\begin{lemma}\cite{cp1}
Let $0, \infty \in \partial \h^n$ stand for the $(0, 1, \dots , 0)^t$ and $(1, 0, \dots, 0)^t \in \H^{n, 1}$ under the projection map $\P$, respectively and let $h \in {\rm PSp}(n, 1)$ be given by (\ref{form}). Then

$$|[h(\infty), 0, \infty, h(0)]| = |bc|,$$
$$|[h(\infty), \infty, 0, h(0)]| = |ad|,$$
$$|[\infty, 0, h(\infty), h(0)]| = \frac{|bc|}{|ad|}.$$
\end{lemma}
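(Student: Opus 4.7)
The plan is to compute each of the three cross ratios directly from the definition, exploiting the simple form of the standard lifts of $0$ and $\infty$ and the fact that $h \in \mathrm{Sp}(n,1)$ preserves the Hermitian form determined by $J_2$. I take the lifts $\boldsymbol{\infty}=(1,0,\ldots,0)^t$ and $\mathbf{0}=(0,1,0,\ldots,0)^t$, and note that if $h$ has the matrix form~(\ref{form}) with top-left block entries $a,b,c,d$ (i.e.\ the $(1,1),(1,2),(2,1),(2,2)$ entries), then $h\boldsymbol{\infty}$ is the first column of $h$ and $h\mathbf{0}$ is the second column, whose first two components are $(a,c)^t$ and $(b,d)^t$ respectively.

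Next I would record the basic Hermitian pairings needed. Multiplying by $J_2$, which permutes the first two coordinates with a sign, gives immediately
\[
\langle\boldsymbol{\infty},h\boldsymbol{\infty}\rangle=-c,\quad \langle\mathbf{0},h\boldsymbol{\infty}\rangle=-a,\quad \langle h\mathbf{0},\mathbf{0}\rangle=-\bar b,\quad \langle h\mathbf{0},\boldsymbol{\infty}\rangle=-\bar d,
\]
together with $\langle\boldsymbol{\infty},\mathbf{0}\rangle=\langle\mathbf{0},\boldsymbol{\infty}\rangle=-1$, and, since $h\in\mathrm{Sp}(n,1)$ preserves the form, $\langle h\mathbf{0},h\boldsymbol{\infty}\rangle=\langle\mathbf{0},\boldsymbol{\infty}\rangle=-1$.

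Now I plug into the quaternionic cross ratio $[z_1,z_2,z_3,z_4]=\langle\z_3,\z_1\rangle\langle\z_3,\z_2\rangle^{-1}\langle\z_4,\z_2\rangle\langle\z_4,\z_1\rangle^{-1}$, being careful to preserve the order because $\H$ is non-commutative. For the first identity, with $(\z_1,\z_2,\z_3,\z_4)=(h\boldsymbol{\infty},\mathbf{0},\boldsymbol{\infty},h\mathbf{0})$, the product becomes $(-c)(-1)^{-1}(-\bar b)(-1)^{-1}=c\bar b$, whose modulus is $|bc|$. For the second, with $(\z_1,\z_2,\z_3,\z_4)=(h\boldsymbol{\infty},\boldsymbol{\infty},\mathbf{0},h\mathbf{0})$, the product is $(-a)(-1)^{-1}(-\bar d)(-1)^{-1}=a\bar d$, with modulus $|ad|$. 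For the third, with $(\z_1,\z_2,\z_3,\z_4)=(\boldsymbol{\infty},\mathbf{0},h\boldsymbol{\infty},h\mathbf{0})$, the product is $(-\bar c)(-\bar a)^{-1}(-\bar b)(-\bar d)^{-1}=\bar c\,\bar a^{-1}\bar b\,\bar d^{-1}$, whose modulus is $|bc|/|ad|$ by the multiplicativity of $|\cdot|$ on $\H$.

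There is essentially no conceptual obstacle here; the only potential pitfall is bookkeeping. One must keep the four quaternionic factors strictly in the prescribed order (they cannot be commuted past one another), and one must correctly invert the negative quaternions via $(-x)^{-1}=-x^{-1}$. However, because the modulus is a multiplicative real-valued function on $\H$, the final absolute value is insensitive to the order, which makes the three identities fall out immediately.
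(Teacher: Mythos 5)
Your computation is correct, and it is the natural direct verification: the paper itself gives no proof of this lemma, citing it to Cao--Parker \cite{cp1}, and your argument (take the standard null lifts, read off $h\boldsymbol{\infty}$ and $h\mathbf{0}$ as the first two columns of the matrix, evaluate the six pairings against $J_2$, and use invariance of the form for $\langle h\mathbf{0},h\boldsymbol{\infty}\rangle$) is exactly the expected one. The only point worth making explicit is that the modulus of the quaternionic cross ratio is independent of the choice of lifts (right-scaling $\z_i$ by $\mu$ changes the product only by factors whose moduli cancel), which licenses your use of the matrix columns as lifts of $h(\infty)$ and $h(0)$; with that remark added, the proof is complete.
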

Now, Cao and Parker's theorem may be stated as follows. 
\begin{theorem} {\rm (Cao and Parker)  \cite{cp1}}\label{cpt}
Let $g$ and $h$ be elements of $\Sp(n, 1)$ such that $g$ is loxodromic element with fixed points $u, v \in \partial \h^n$, and $M_g<1$. If $\langle g, h \rangle$ is non-elementary and discrete, then 
\begin{equation}\label{eql} |[h(u), u, v, h(v)]|^{\frac{1}{2}}|[h(u), v, u, h(v)]|^{\frac{1}{2}} \geq \frac{1 - M_g}{M_g^2}. \end{equation} 
\end{theorem}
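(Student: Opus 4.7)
The plan is to adapt the classical J\o{}rgensen iteration to the quaternionic hyperbolic setting. Since both $M_g$ and the absolute values of the cross ratios in the conclusion are invariant under simultaneous conjugation of $(g,h)$, I may first conjugate in $\Sp(n,1)$ so that the fixed points of $g$ become the standard points $0,\infty\in\partial\h^n$, and $g$ is in the diagonal form \eqref{elp}, $g=\mathrm{diag}(\lambda_1,\bar\lambda_1^{-1},\lambda_3,\ldots,\lambda_{n+1})$ with $|\lambda_1|>1$. Writing $h$ in block form
\[
h=\begin{pmatrix} a & b & \alpha^* \\ c & d & \beta^* \\ \gamma & \delta & A \end{pmatrix},
\]
the preceding cross-ratio lemma identifies the left-hand side of the desired inequality with $|ad|^{1/2}|bc|^{1/2}=|abcd|^{1/2}$. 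Non-elementarity of $\langle g,h\rangle$ means that $h$ does not preserve $\{0,\infty\}$; together with the $\Sp(n,1)$ relations $h^*J_2h=J_2$, this rules out the degenerate case $abcd=0$ whenever the conclusion needs to be proved.

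I argue by contradiction. Assume
\[
|abcd|^{1/2}<\frac{1-M_g}{M_g^{2}},
\]
equivalently $\theta:=M_g+M_g^{2}|abcd|^{1/2}<1$. Define inductively $h_0:=h$ and $h_{k+1}:=h_kgh_k^{-1}\in\langle g,h\rangle$, and write $h_k$ with entries $a_k,b_k,c_k,d_k,\ldots$. The central computation is an explicit evaluation of $h_kgh_k^{-1}$ together with the resulting recursive estimate
\[
|a_{k+1}b_{k+1}c_{k+1}d_{k+1}|^{1/2}\;\leq\;\bigl(M_g+M_g^{2}|a_kb_kc_kd_k|^{1/2}\bigr)\,|a_kb_kc_kd_k|^{1/2}.
\]
Since the bracketed factor is monotone increasing in $|a_kb_kc_kd_k|^{1/2}$, a straightforward induction then yields $|a_kb_kc_kd_k|^{1/2}\leq \theta^{k}|abcd|^{1/2}\to 0$ geometrically. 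A parallel bound on the remaining blocks $\alpha_k,\beta_k,\gamma_k,\delta_k,A_k$ shows that $h_{k+1}=h_kgh_k^{-1}\to g$ in the matrix topology.

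Discreteness of $\langle g,h\rangle$ then forces $h_{k+1}=g$ for all sufficiently large $k$, i.e.\ $h_k$ lies in the centraliser of $g$ in $\Sp(n,1)$. Since $g$ is loxodromic with fixed points $\{0,\infty\}$, its centraliser preserves this pair. Now $h_k=h_{k-1}gh_{k-1}^{-1}$ is itself loxodromic with fixed points $\{h_{k-1}(0),h_{k-1}(\infty)\}$, and a loxodromic element preserving the two-point set $\{0,\infty\}$ must have precisely that set as its fixed-point set. Hence $h_{k-1}$ also preserves $\{0,\infty\}$, and iterating this descent down to $h_0=h$ shows that $h$ itself preserves $\{0,\infty\}$, contradicting the assumed non-elementarity of $\langle g,h\rangle$.

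The main difficulty lies in the explicit recursive estimate. The non-commutativity of $\H$ forces careful ordering of all products in $h_kgh_k^{-1}$, and the off-diagonal blocks $\alpha,\beta,\gamma,\delta,A$ couple back into the $(a,b,c,d)$ corner through the quadratic constraints imposed by $h^*J_2h=J_2$. One must extract the coefficient in front of $|a_kb_kc_kd_k|^{1/2}$ and show that it decomposes exactly as the \emph{spectral} piece $M_g$, built from $|\lambda_1-1|+|\bar\lambda_1^{-1}-1|+2\delta_{cp}(g)$, plus a \emph{mixing} piece $M_g^{2}|a_kb_kc_kd_k|^{1/2}$. This is where the specific linear combination used in the definition of the Cao--Parker invariant $M_g$ is essential; any other combination would fail to yield the contracting factor~$\theta$.
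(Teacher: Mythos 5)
This theorem is not proved in the paper at all: it is quoted from Cao and Parker \cite{cp1} and used as a black box, so there is no in-paper argument to compare yours against. Your outline does reproduce the standard J\o{}rgensen-type strategy that Cao and Parker themselves follow (normalise $g$ to the diagonal form \eqref{elp} with fixed points $0,\infty$, identify the cross-ratio expression with $|ad|^{1/2}|bc|^{1/2}$ via the preceding lemma, iterate $h_{k+1}=h_kgh_k^{-1}$, and derive a contradiction either with discreteness or with non-elementarity). However, as a proof it has a decisive gap: the recursive inequality
\[
|a_{k+1}b_{k+1}c_{k+1}d_{k+1}|^{1/2}\le\bigl(M_g+M_g^{2}|a_kb_kc_kd_k|^{1/2}\bigr)\,|a_kb_kc_kd_k|^{1/2}
\]
is asserted but never derived, and you yourself flag it as ``the main difficulty.'' That estimate is the entire quantitative content of the theorem; everything surrounding it (geometric decay once $\theta<1$, the convergence $h_k\to g$, the eventual stabilisation forced by discreteness, the descent showing $h$ preserves $\{0,\infty\}$) is routine scaffolding common to all J\o{}rgensen-type proofs. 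Reverse-engineering the shape of the recursion from the threshold $(1-M_g)/M_g^{2}$ is a reasonable sanity check, but it does not substitute for the actual computation of the entries of $h_kgh_k^{-1}$ and the bounds extracted from $h_k^{*}J_2h_k=J_2$, which is precisely where the specific definition of $M_g$ and the non-commutativity of $\H$ enter.

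A secondary inaccuracy: you claim that non-elementarity rules out $abcd=0$. It does not. From $h^{*}J_2h=J_2$ one checks that $c=0$ iff $h(\infty)=\infty$, $b=0$ iff $h(0)=0$, $a=0$ iff $h(\infty)=0$, and $d=0$ iff $h(0)=\infty$; so, for instance, $d=0$ with $h(\infty)\notin\{0,\infty\}$ is perfectly compatible with $h$ not preserving $\{0,\infty\}$, yet it makes the left-hand side of \eqref{eql} vanish and would falsify the conclusion. To dispose of these degenerate cases you must also invoke discreteness, via the classical fact that two loxodromic elements of a discrete rank-one group cannot share exactly one fixed point (applied to $g$ and $hgh^{-1}$). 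This is fixable, but as written those cases are not covered.
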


\subsection{A Shimizu's Lemma in  $\Sp(n,1)$} \label{shi}
 We use the Hermitian form $J_2$ in this section. Up to conjugacy, we assume that an Heisenberg translation fixes the boundary point $0$, i.e. it is of the form
\begin{equation} \label{he} 
T_{s, \zeta}=\begin{pmatrix} 1 & 0 & 0 \\ s & 1 & {\zeta}^{\ast} \\ \zeta & 0 & I \end{pmatrix}, 
\end{equation}
where $Re(s)=\frac{1}{2} |\zeta|^2$.

 Let $A$ be an element in $\Sp(n,1)$. Then one can choose $A$ to be of the following form. 
\begin{equation}\label{form}
A=\begin{pmatrix} a & b & \gamma^{\ast} \\ c & d & \delta^{\ast} \\ \alpha & \beta & U \end{pmatrix},  \end{equation}
where $a, b, c, d$ are scalars, $\gamma, \delta, \alpha, \beta$ are column matrices and $U$ is an element in $M(n-1, \H)$. 
Then, it is easy to compute that 
$$A^{-1}=\begin{pmatrix} \bar d & \bar b & -\beta^{\ast} \\ \bar c & \bar a & -\alpha^{\ast} \\ -\delta & -\gamma & U^{\ast} \end{pmatrix}.$$

 The following theorem follows by mimicking the arguments of Hersonsky and Paulin in  \cite[Appendix]{hp}. However, Hersonsky and Paulin proved it over the complex numbers. To write it down over the quaternions, only slight variation is needed, and is straight-forward. We skip the details. 
\begin{theorem}\label{sht}
Suppose $T_{s, \zeta}$ be an Heisenberg translation in $\Sp(n,1)$ and $A$ be an element in $\Sp(n,1)$ of the form \eqnref{form}. Suppose $A$ does not fix $0$. Set
\begin{equation}t={\rm Sup}\{|b|, |\beta|, |\gamma|, |U-I| \}, \ M=|s|+2|\zeta|. \end{equation}
If  $M t +2|\zeta| < 1,$
then the group generated by $A$ and $T_{s, \zeta}$ is either non-discrete or fixes $0$. 
\end{theorem}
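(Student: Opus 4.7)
The plan is to follow the standard Shimizu-type iterative scheme adapted to the quaternionic setting, as the authors indicate. Suppose for contradiction that $\langle A, T_{s,\zeta}\rangle$ is discrete and $A$ does not fix $0$; the goal is then to derive a contradiction from the hypothesis $Mt + 2|\zeta|<1$.

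I would construct an iterative sequence inside the group by setting $A_0 := A$ and $A_{k+1} := A_k\, T_{s,\zeta}\, A_k^{-1}$, so that each $A_k$ with $k\ge 1$ is a conjugate of the Heisenberg translation $T_{s,\zeta}$. Writing $A_k$ in the block form \eqref{form} with entries $a_k, b_k, c_k, d_k, \alpha_k, \beta_k, \gamma_k, \delta_k, U_k$, the relevant quantity to track is
\[
t_k := \sup\{|b_k|,\, |\beta_k|,\, |\gamma_k|,\, |U_k - I|\},
\]
which measures precisely how far $A_k$ is from being a Heisenberg translation fixing $0$. The core of the argument is a recursive inequality of the form
\[
t_{k+1} \le (M\, t_k + 2|\zeta|)\, t_k,
\]
which, under the hypothesis $Mt_0 + 2|\zeta|<1$, propagates to give $Mt_k + 2|\zeta|<1$ for all $k$ and hence forces $t_k \to 0$ geometrically. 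This estimate comes from expanding $A_k T_{s,\zeta} A_k^{-1}$ block by block using the explicit matrix \eqref{he}, the inverse formula for $A_k$ displayed just before \eqref{form}, the constraint $Re(s)=\tfrac{1}{2}|\zeta|^2$, and the $\Sp(n,1)$ relations $A_k^* J_2 A_k = J_2$ (which supply bounds of the form $|a_k|,|d_k|\le 1+O(t_k)$ together with orthogonality-type relations among the columns of $A_k$). Applying the quaternionic triangle inequality term-by-term to the resulting expressions for $b_{k+1}, \beta_{k+1}, \gamma_{k+1}, U_{k+1}-I$ then yields the claimed bound, with the factor $M = |s|+2|\zeta|$ collecting the off-diagonal contributions of $T_{s,\zeta}$ and the additive term $2|\zeta|$ capturing the lower-order linear-in-$\zeta$ contributions.

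Once $t_k \to 0$, the sequence $\{A_k\}$ converges to a Heisenberg translation fixing $0$. If infinitely many of the $A_k$ are distinct, this is an accumulation point in the discrete group, the desired contradiction; otherwise the sequence eventually stabilizes at some $A_k$ centralizing $T_{s,\zeta}$, and unfolding the recursion backwards forces $A$ itself to fix $0$, contradicting the standing assumption. The main obstacle will be the bookkeeping of the matrix arithmetic over the non-commutative ring $\H$: every identity used in the complex Hersonsky--Paulin argument must be re-checked respecting the order of quaternionic multiplication, and each $\Sp(n,1)$ relation must be applied on the correct side of the relevant factor. Once this care is taken, the complex-hyperbolic argument transfers essentially verbatim to give the stated conclusion.
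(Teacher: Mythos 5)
Your proposal follows exactly the Hersonsky--Paulin iterative scheme ($A_0=A$, $A_{k+1}=A_k T_{s,\zeta}A_k^{-1}$, the contraction estimate $t_{k+1}\le (Mt_k+2|\zeta|)t_k$ forcing $t_k\to 0$, and the backwards unfolding to show $A$ fixes $0$ when the sequence stabilizes), which is precisely the argument the paper invokes: the paper supplies no details of its own, saying only that the complex-hyperbolic proof in the appendix of Hersonsky--Paulin carries over to $\Sp(n,1)$ with slight, straightforward modifications. So your approach coincides with the paper's, and like the paper you leave the quaternionic block-by-block verification of the key recursive inequality (and the boundedness of the remaining entries needed for convergence of $\{A_k\}$) as deferred bookkeeping.
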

This is the simplest quaternionic version of the Shimizu's lemma for two generator subgroups of $\Sp(n,1)$ with a unipotent parabolic generator. More generalized versions of  the Shimizu's lemma in $\Sp(n,1)$ has been obtained by Kim and Parker in \cite[Theorem 4.8]{kp}, and  Cao and Parker \cite{cp2}.  Though the above version of the quaternionic Shimizu's lemma is simpler, it is weaker than the versions of Kim and Parker, and Cao and Parker. However, we find it easier to apply for our purpose. 

\subsection{Useful Results}\label{dstu}
 A subgroup $G$ of $\Sp(n, 1)$ is called elementary if it has a finite orbit in ${\bf H}_\H^n\cup\partial{\bf H}_\H^n$. If all of its
orbits are infinite then $G$ is non-elementary. In particular, $G$ is non-elementary if it contains two non-elliptic elements of infinite order with distinct fixed points.

\begin{theorem}\label{dstu1}\cite{chen}
 Let $G$ be a  Zariski dense subgroup of $\Sp(n,1)$. Then $G$ is either discrete or dense in $\Sp(n,1)$.
\end{theorem}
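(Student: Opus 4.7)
The plan is to exploit the standard dichotomy for closed subgroups of simple Lie groups, with the Zariski density hypothesis serving precisely as the wedge that forces the dichotomy between ``tiny'' (discrete) and ``full'' ($=\Sp(n,1)$).

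First, I would pass to the topological closure $\bar G \subset \Sp(n,1)$. By Cartan's closed subgroup theorem, $\bar G$ is a Lie subgroup of $\Sp(n,1)$; let $\bar G^0$ be its identity component and $\mathfrak{h} \subset \mathfrak{sp}(n,1)$ the associated Lie algebra. Since $\bar G^0$ is normal in $\bar G$, the subspace $\mathfrak{h}$ is $\mathrm{Ad}(\bar G)$-invariant. The goal is then to show $\mathfrak{h} \in \{0,\, \mathfrak{sp}(n,1)\}$.

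Second, I would argue that the geometric Zariski density of $G$ (hence of $\bar G$) upgrades to honest algebraic Zariski density of $\bar G$ in the linear algebraic group $\Sp(n,1)$. The content is that any proper Zariski closed subgroup of $\Sp(n,1)$ must preserve a geometric structure on $\h^n \cup \partial \h^n$ of one of the two types excluded by hypothesis: either it stabilizes an isotropic line (yielding a fixed boundary point) or it stabilizes a non-degenerate proper $\H$-subspace of $\H^{n,1}$ (which descends to an invariant totally geodesic subspace of $\h^n$). Once this is in place, algebraic Zariski density follows. Since the stabilizer $\{g \in \Sp(n,1) : \mathrm{Ad}(g)\mathfrak{h} \subseteq \mathfrak{h}\}$ is Zariski closed and contains the Zariski dense subset $\bar G$, it must coincide with $\Sp(n,1)$, so $\mathfrak{h}$ is in fact an ideal of $\mathfrak{sp}(n,1)$.

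Third, I would invoke simplicity: $\mathfrak{sp}(n,1)$ is a real simple Lie algebra, so $\mathfrak{h}=\{0\}$ or $\mathfrak{h}=\mathfrak{sp}(n,1)$. If $\mathfrak{h}=\{0\}$, then $\bar G^0$ is trivial, so $\bar G$ is zero-dimensional and hence $G$ is discrete. If $\mathfrak{h}=\mathfrak{sp}(n,1)$, then $\bar G^0 = \Sp(n,1)$ by the connectedness of $\Sp(n,1)$, so $\bar G = \Sp(n,1)$ and $G$ is dense.

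The main obstacle I anticipate is the second step, namely translating the paper's geometric formulation of Zariski density (no global fixed point on $\h^n\cup\partial\h^n$ and no invariant totally geodesic subspace) into algebraic Zariski density inside $\Sp(n,1)$. The cleanest route is to pass to the Lie algebra level and classify the proper $\mathrm{Ad}(G)$-invariant subalgebras of $\mathfrak{sp}(n,1)$, matching each against a preserved geometric object: parabolic subalgebras to stabilizers of isotropic lines (boundary fixed points), and reductive subalgebras of the form $\mathfrak{sp}(k,1)\oplus\mathfrak{sp}(n-k)$ (and their $\C$- and $\R$-restricted analogues) to stabilizers of non-degenerate $\H^{k,1}$, $\C^{k,1}$, or $\R^{k,1}$ subspaces giving totally geodesic sub-hyperbolic spaces. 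This classification is the substantive content; once available, the Lie-theoretic steps above are essentially formal.
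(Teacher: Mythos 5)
This theorem is not proved in the paper at all: it is imported verbatim from Chen--Greenberg \cite{chen}, so there is no in-paper argument to compare against. Your skeleton (pass to $\bar G$, note $\mathfrak h=\mathrm{Lie}(\bar G^0)$ is $\mathrm{Ad}(\bar G)$-invariant because $\bar G^0$ is normal in $\bar G$, force $\mathfrak h$ to be an ideal, invoke simplicity of $\mathfrak{sp}(n,1)$ and connectedness of $\Sp(n,1)$) is the standard strategy and is essentially the shape of the Chen--Greenberg proof. The first and third steps are fine as written.

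The genuine gap is your second step, which you yourself flag as ``the substantive content'' and then do not carry out --- and, as announced, it is also not correct. It is false that every proper Zariski-closed subgroup of $\Sp(n,1)$ stabilizes an isotropic line or a nondegenerate proper $\H$-subspace of $\H^{n,1}$: the maximal compact $\Sp(n)\times\Sp(1)$ does neither (it fixes an interior point of $\h^n$), and the subgroups of type $\SU(n,1)\cdot\Sp(1)$ and $\mathrm{SO}(n,1)$ preserve a complex or real form of $\H^{n,1}$, hence totally geodesic copies of $\ch^n$ or ${\bf H}^n_{\R}$, which do not arise from proper $\H$-subspaces. Your closing sentence acknowledges the ``$\C$- and $\R$-restricted analogues,'' but the dichotomy you actually state omits them and omits interior fixed points, and in any case the required classification --- in particular, ruling out a positive-dimensional connected subgroup acting irreducibly on $\H^{n,1}$ without preserving any isotropic line, nondegenerate subspace, or real/complex structure --- is exactly where the theorem lives and is left unproven. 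A cleaner route, and the one closer to \cite{chen}, avoids classifying subalgebras altogether: set $N=\bar G^0$; since $G$ normalizes $N$, the fixed-point set of $N$ in $\overline{\h^n}$, the limit set of $N$, and the minimal totally geodesic subspace containing an $N$-orbit are all $G$-invariant, and the geometric Zariski-density hypothesis forces each of these to be empty or all of $\h^n\cup\partial\h^n$; from this one deduces directly that $N$ is trivial (so $G$ is discrete) or $N=\Sp(n,1)$ (so $G$ is dense). As submitted, your proposal is a correct outline with its pivotal step missing and misstated.
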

\section{Cao-Tan Inequality Revisited }\label{ctjo} 

\begin{theorem} \label{eljo}
Let $g$ and $h$ be elements of $\Sp(n, 1)$. Suppose that $g$ is a regular elliptic element with fixed point $q$, and $\delta(g)$ as in \eqnref{eqq1}.  If 
\begin{equation} \label{ct}
\cosh \frac{\rho(q, h(q))}{2}~ \delta(g) <1, 
\end{equation} 
then the group $\langle g, h \rangle$ generated by $g$ and $h$ is either elementary or not discrete.
\end{theorem}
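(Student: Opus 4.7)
My plan is to argue by contradiction via a Jørgensen-type iteration. Assume $\langle g,h\rangle$ is non-elementary and discrete. After conjugating inside $\Sp(n,1)$, I may pass to the ball model using $J_1$, place the unique fixed point $q$ of $g$ at the origin $\mathbb{P}(1,0,\ldots,0)^t$, and write $g = \mathrm{diag}(\lambda_1,\ldots,\lambda_{n+1})$ with $|\lambda_i| = 1$. For any $A\in \Sp(n,1)$, the distance formula combined with $A^*J_1 A = J_1$ gives $\cosh^2(\rho(q,A(q))/2) = |A_{11}|^2$, while the row identity $AJ_1 A^* = J_1$ yields $|A_{11}|^2 - \sum_{i\geq 2}|A_{1i}|^2 = 1$ together with the orthogonality relations $\sum_i(J_1)_{ii}A_{1i}\overline{A_{ji}} = 0$ for $j\geq 2$.

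Set $h_0 := h$ and iterate $h_{k+1} := h_k g h_k^{-1}$. All $h_k$ lie in $\langle g,h\rangle$; for $k\geq 1$ each $h_k$ is conjugate to $g$, hence regular elliptic with the same eigenvalue classes, and has unique fixed point $h_{k-1}(q)$. Writing $\alpha_k := \cosh(\rho(q,h_k(q))/2) = |(h_k)_{11}|$ and using $(A^{-1})_{ij} = (J_1)_{ii}(J_1)_{jj}\overline{A_{ji}}$, the crucial expansion is, for $j\geq 2$,
\[
(h_{k+1})_{1j} \;=\; \sum_{i=1}^{n+1}(J_1)_{ii}(h_k)_{1i}(\lambda_i - 1)\overline{(h_k)_{ji}},
\]
where the ``constant part'' of each $\lambda_i$ is annihilated by the above orthogonality. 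Following Cao and Tan and using Cauchy--Schwarz adapted to the $J_1$-Hermitian structure, together with the row/column norm identities, I aim to derive a contraction of the shape
\[
\alpha_{k+1}^2 - 1 \;\leq\; (\alpha_k\,\delta(g))^2\,(\alpha_k^2 - 1),
\]
so that $\alpha_0\delta(g)<1$ forces $\alpha_k\to 1$, i.e.\ $h_{k-1}(q)\to q$. The main obstacle is arranging the bound so that precisely the asymmetric invariant $\delta(g) = |\lambda_1 - 1| + \max_{i\geq 2}|\lambda_i - 1|$ appears on the right, and no factor of $n$ survives: the $i = 1$ piece of the sum must be weighted by $|(h_k)_{11}|^2 = \alpha_k^2$ and contribute the $|\lambda_1 - 1|$ factor, while the $i\geq 2$ pieces must be weighted by $\sum_{i\geq 2}|(h_k)_{1i}|^2 = \alpha_k^2 - 1$ and collectively contribute only $\max_{i\geq 2}|\lambda_i - 1|$. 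Keeping these two blocks separate until after the triangle inequality, and deferring passage to moduli to absorb the quaternionic non-commutativity in the factor $(\lambda_i - 1)$ at the last moment, is what produces the combination defining $\delta(g)$ rather than a coarser invariant.

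Granting the contraction, the sequence $\{h_k\}_{k\geq 1}$ consists of regular elliptic isometries with prescribed eigenvalue classes and fixed points confined to the ball $\{p\in\h^n : \rho(q,p)\leq \rho(q,h(q))\}$, hence lies in a compact subset of $\Sp(n,1)$. Discreteness of $\langle g,h\rangle$ then forces $\{h_k\}$ to be finite, so $h_K = h_{K+1}$ for some $K\geq 1$, and the defining relation $h_{K+1} = h_K g h_K^{-1}$ rearranges to $h_K = g$. Then $g = h_K = h_{K-1} g h_{K-1}^{-1}$ shows $h_{K-1}$ centralizes $g$, hence fixes the unique $\h^n$-fixed point $q$ of $g$; but $h_{K-1}$ is itself a regular elliptic conjugate of $g$ with unique fixed point $h_{K-2}(q)$, so $h_{K-2}(q) = q$. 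Downward induction forces $h_j(q) = q$ for all $j$, and in particular $h(q) = q$. Hence $\langle g,h\rangle$ fixes $q\in\h^n$ and is elementary, contradicting the assumption.
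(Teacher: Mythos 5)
Your core argument is the same as the paper's: the Cao--Tan iteration $h_{k+1}=h_kgh_k^{-1}$ in the ball model and the contraction $\alpha_{k+1}^2-1\le(\alpha_k\delta(g))^2(\alpha_k^2-1)$. The central inequality is only sketched in your write-up, but the blocking you describe is the right one and does close: the $i=1$ term contributes $\alpha_k^2|\lambda_1-1|^2(\alpha_k^2-1)$ via the column identity $\sum_{j\ge2}|(h_k)_{j1}|^2=\alpha_k^2-1$, the $i\ge2$ block contributes at most $\alpha_k^2\max_{i\ge2}|\lambda_i-1|^2(\alpha_k^2-1)$ via $A^{(k)*}A^{(k)}=I+\beta^{(k)*}\beta^{(k)}$, and the triangle inequality in $\ell^2$ over $j$ produces exactly the sum $|\lambda_1-1|+\max_{i\ge2}|\lambda_i-1|=\delta(g)$. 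This is where the paper's new invariant enters (the paper reaches the same point through $|u_1-u_i|\le|u_1-1|+|u_i-1|$ with $u_i$ a quaternionic conjugate of $\lambda_i$); since this is the whole content of the theorem relative to Cao--Tan, you should carry it out rather than defer it. Your endgame is genuinely different from the paper's: the paper lets $k\to\infty$, shows all block entries converge along a subsequence, and contradicts discreteness; you instead trap $\{h_k\}_{k\ge1}$ in the compact set of conjugates of $g$ with fixed point in a bounded ball and invoke discreteness to get finiteness. That is a legitimate and arguably cleaner route, since it avoids proving convergence of $\alpha^{(k)}$ and $A^{(k)}$.

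However, there is a gap in that endgame: finiteness of $\{h_k\}$ does not give $h_K=h_{K+1}$ for some $K$; since the sequence is generated by the deterministic map $x\mapsto xgx^{-1}$, finiteness only gives eventual periodicity, $h_a=h_{a+p}$ with possibly $p>1$, and your rearrangement to $h_K=g$ is then unavailable. To close the gap, use the strict monotonicity hidden in your own contraction: $\alpha$ is non-increasing, so $h_a=h_{a+p}$ forces $\alpha_a=\alpha_{a+1}$, and the inequality $\alpha_{a+1}^2-1\le(\alpha_a\delta(g))^2(\alpha_a^2-1)$ with $\alpha_a\delta(g)<1$ then forces $\alpha_a=1$, i.e.\ $\beta^{(a)}=0$ and $h_a$ fixes $q$. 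From there your downward induction on unique fixed points of regular elliptic conjugates runs verbatim (and the detour through ``$h_K=g$'' becomes unnecessary), yielding $h(q)=q$ and elementarity. With that repair, and with the contraction actually derived, the proof is sound.
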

The proof of the above theorem is a variation of the proof of \cite[Theorem 1.1]{ct1}. The initial computations are very similar, except that at a crucial stage we replace the Cao-Tan invariant by $\delta(g)$ and observe that it still works. We sketch the proof for completeness. We follow similar notations as in \cite{ct1}. We shall use the ball model, i.e. Hermitian from $J_1$ is being used is what follows:

\begin{proof} Using conjugation, we may assume that $g$ is of the following form (\ref{elp}) having fixed point $q=(0,\ldots, 0)^t \in \h^n$ and

$$ h = (a_{i,j})_{i, j = 1, \dots , n+1} = \begin{pmatrix}    a_{1, 1} & \beta \\ \alpha & A \end{pmatrix}.$$ 
For  $L=diag(\lambda_2, \ldots, \lambda_{n+1})$, write $g$ as:
$$g=\begin{pmatrix} \lambda_1 & 0 \\ 0 & L \end{pmatrix}.$$
Then 
$$ \cosh \frac{\rho(q, h(q))}{2} = |a_{1, 1}|, \; \delta(g) = \max\{ \ |\lambda_1-1|+|\lambda_i-1| \ : \ i=2, \ldots, n+1\}.$$
The inequality \eqnref{ct}  becomes, 
\begin{equation} \label{equa0}
 |a_{1, 1}|\delta(g) < 1. 
\end{equation}

Let $h_0 = h$ and $h_{k+1} = h_kgh_k^{-1}.$ We write
$$h_k = (a_{i,j}^{(k)})_{i, j = 1, \dots , n+1}= \begin{pmatrix}    a_{1, 1}^{(k)} & \beta^{(k)}\\\alpha^{(k)} & A^{(k)}\end{pmatrix}.$$
If for some $k$, $\beta^{(k)} =0$, as in the proof of \cite[Theorem 1.1]{ct1}, it follows that $\langle g, h \rangle$ is elementary. So, assume $\beta^{(k)} \neq 0$ and the group $\langle g, h \rangle$ is non-elementary. Then following exactly similar computations as in the proof of \cite[Theorem 1.1]{ct1}, one can see that: 
\begin{equation} \label{equ1}
|a_{1, 1}^{(k+1)}|^2  \leq  |a_{1, 1}^{(k)}|^4 + |\beta^{(k)}|^4 - \sum_{i=2}^{n+1}|a_{1, 1}^{(k)}|^2|a_{1, i}^{(k)}|^2(2-|u_1 - u_{i}|^2),
\end{equation}
where 
 $$u_i = \overline{a_{1, i}^{(k)}}^{-1}\lambda_i \overline{a_{1, i}^{(k)}}, \; \; i = 2, \dots n+1.$$

Noting that $ |a_{1, 1}^{(k)}|^2 - |\beta^{(k)}|^2 =1$, by (\ref{equ1}) we have
\begin{eqnarray} |a_{1, 1}^{(k+1)}|^2 -1 &  \leq &  |a_{1, 1}^{(k)}|^2 \sum_{i=2}^{n+1} |a_{1, i}^{(k)}|^2|u_1 - u_{i}|^2\\ 
 &  \leq &  |a_{1, 1}^{(k)}|^2 \sum_{i=2}^{n+1} |a_{1, i}^{(k)}|^2| \bigg(|u_1-1|^2 + |u_{i}-1|^2\bigg)\\
 & \leq &   |a_{1, 1}^{(k)}|^2 \sum_{i=2}^{n+1} |a_{1, i}^{(k)}|^2~ \bigg(|u_1-1| + |u_{i}-1|\bigg)^2. 
\end{eqnarray} 
Therefore
\begin{equation} \label{equa2}
|a_{1, 1}^{(k+1)}|^2 -1 \leq (|a_{1, 1}^{(k)}|^2 -1)~|a_{1, 1}^{(k)}|^2  \delta^2(g).
\end{equation}
Now, it follows by induction that 
$$|a_{1, 1}^{(k+1)}| < |a_{1, 1}^{(k)}|, $$
and
\begin{equation} \label{equa3}
|a_{1, 1}^{(k+1)}|^2 -1 < (|a_{1, 1}|^2 - 1) (|a_{1, 1}|^2 \delta^2(g))^{k+1}.
\end{equation}

Since $|a_{1, 1}|\delta(g)<1$,  $|a_{1, 1}^{(k)}| \to 1$.  Now, as in the last part of the proof of \cite[Theorem 1.1]{ct1}, 
$$ \beta^{(k)} \to 0, \; \alpha^{(k)} \mapsto 0, ~~A^{(k)}(A^{(k)})^{\ast} \to I_n. $$

By passing to its subsequence, we may assume that 

$$A^{(k_t)}\to A_{\infty}, \; \; a_{1, 1}^{(k_t)} \to a_{\infty}.$$
Thus $h_{k+1}$ converges to 
$$h_{\infty} = \begin{pmatrix} a_{\infty} & 0 \\ 0 & A_{\infty} \end{pmatrix} \in \Sp(n, 1), $$
which implies that $\langle g, h \rangle$ is not discrete. This completes the proof.
\end{proof} 

Using embedding of $\SL(2, \C)$ in $\Sp(1,1)$ and then applying similar arguments as in the proof of \cite[Theorem 1.2]{ct1}, we have the following corollary that may be thought of a generalized version of the classical J\o{}rgensen inequality in $\SL(2, \C)$ for two generator subgroups with an elliptic generator. 
\begin{cor}\label{jcor1} 
Let $g$ and $h$ are elements in $\SL(2, \C)$. Let 
$$g=\begin{pmatrix} e^{i \theta} & 0 \\ 0 & e^{-i \theta} \end{pmatrix},~ \theta \in [0, \pi], ~~h=\begin{pmatrix} a & b \\ c & d \end{pmatrix}.$$
Let $||h||^2=|a|^2 + |b|^2 + |c|^2 +|d|^2$. If $\langle g, h \rangle$ is non-elementary and discrete, then
\begin{equation} \label{eq111} 4 \sin^2 \frac{\theta}{2} \bigg(||h||^2 +2 \bigg)\geq 1.\end{equation} 
\end{cor}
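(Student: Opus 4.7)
The plan is to transfer the problem to $\Sp(1, 1)$ via the natural embedding $\iota : \SL(2, \C) \hookrightarrow \Sp(1, 1)$, arising from identifying $\SL(2, \C)/\{\pm I\}$ with the orientation-preserving isometry group of real hyperbolic $3$-space $\H^3_\R$ and embedding $\H^3_\R$ totally geodesically and isometrically inside $\h^1$ (which itself is isometric to real hyperbolic $4$-space). The endpoint cases $\theta \in \{0, \pi\}$ may be discarded immediately: there $g = \pm I$ is central in $\SL(2, \C)$, so $\langle g, h\rangle$ projects to a cyclic subgroup of $\mathrm{PSL}(2, \C)$ and is elementary, making the hypothesis vacuous. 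Henceforth $\theta \in (0, \pi)$.

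With this setup I would feed $\iota(g), \iota(h) \in \Sp(1, 1)$ into the contrapositive of \thmref{eljo}, which requires computing $\delta(g)$ and $\cosh(\rho(q, h(q))/2)$ for a suitable fixed point $q$ of $g$. Over $\C$ the eigenvalues of $g$ are $e^{\pm i\theta}$, both of unit modulus, so
$$\delta(g) = |e^{i\theta} - 1| + |e^{-i\theta} - 1| = 4 \sin(\theta/2).$$
For the displacement, I take $q \in \h^1$ to be the image of the basepoint $j = (0, 0, 1)$ of the upper half-space model of $\H^3_\R$. The Poincar\'e extension of $g$ acts on $\widehat{\C}$ as the rotation $z \mapsto e^{2i\theta} z$ and hence fixes the vertical geodesic through $j$ pointwise, so $g \cdot q = q$. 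The classical formula from the theory of real hyperbolic $3$-space,
$$2 \cosh \rho_{\H^3_\R}(j, h(j)) = |a|^2 + |b|^2 + |c|^2 + |d|^2 = \|h\|^2,$$
follows routinely from the Poincar\'e extension $h(j) = (b\bar d + a\bar c)/(|c|^2 + |d|^2) + j/(|c|^2 + |d|^2)$ and $ad - bc = 1$. Because $\H^3_\R$ sits isometrically and totally geodesically in $\h^1$, the same identity holds for $\rho = \rho_{\h^1}$, and the half-angle formula yields
$$\cosh \frac{\rho(q, h(q))}{2} = \sqrt{\frac{1 + \|h\|^2/2}{2}} = \frac{\sqrt{\|h\|^2 + 2}}{2}.$$
Applying the contrapositive of \thmref{eljo} then gives $\cosh(\rho(q, h(q))/2) \cdot \delta(g) \geq 1$; substituting the computed values and squaring produces $4 \sin^2(\theta/2)(\|h\|^2 + 2) \geq 1$, as required.

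The step I expect to be the main obstacle is justifying the direct application of \thmref{eljo}: the two quaternionic eigenvalues $e^{i\theta}, e^{-i\theta}$ of $\iota(g)$ are similar in $\H$ (both have real part $\cos\theta$), so $\iota(g)$ is not \emph{regular} elliptic in the strict sense of the paper, and the theorem's hypothesis is not literally satisfied. The way around this is to observe that the proof of \thmref{eljo} uses regularity only to pin down the unique fixed point of $g$; given any fixed point, the iterative convergence argument $h_{k+1} = h_k g h_k^{-1}$ runs unchanged and can in fact be carried out entirely inside $\iota(\SL(2, \C))$. This is the content of the paper's remark that one reruns the arguments in the proof of \cite[Theorem 1.2]{ct1}.
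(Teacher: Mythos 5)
Your proposal is correct and follows essentially the same route as the paper: embed $\SL(2,\C)$ into $\Sp(1,1)$, compute $\delta(\hat g)=4\sin(\theta/2)$ and the displacement of the basepoint, and apply the contrapositive of \thmref{eljo}, exactly as the paper does by deferring to the calculations of Cao--Tan. In fact your version is the more careful one: your value $\cosh^2(\rho(q,h(q))/2)=(\|h\|^2+2)/4$ is the one actually consistent with the stated inequality (the paper's displayed $\cosh^2(\rho/2)=\|h\|^2$ appears to be a slip, since it would yield $16\sin^2(\theta/2)\|h\|^2\geq 1$ instead), and you correctly flag and dispose of the point the paper glosses over, namely that $\iota(g)$ is not regular elliptic in $\Sp(1,1)$ because its two eigenvalue classes coincide.
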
 

\begin{proof}  Let $\hat g$ be the image of $g$ in $\Sp(1,1)$. Then we have in the above notation, using similar calculations as in \cite[Section 4]{ct1}, 
$$\delta(\hat g) = 4 \sin\frac{\theta}{2}, $$
and $\cosh^2(\frac{\rho(0, \hat h(0)}{2})=||h||^2$. This gives the proof. 
\end{proof}

\subsection{Comparison of the conjugacy invariants}
Let $g$ be ellliptic, up to conjugacy, in $\Sp(n,1)$, 
$$g=diag(\lambda_1, \ldots, \lambda_{n+1}), $$
where $|\lambda_i|=1$ for all $i$. Cao and Tan used the following conjugacy invariant instead of $\delta(g)$: 
$$\delta_{ct}(g)=\max\{|\lambda_i-\lambda_1|^2 \ : \ i=2, \ldots, n+1\}.$$

We have  
$$\delta(g)=\max\{ \ |\lambda_1-1|+|\lambda_i-1| \ : \ i=2, \ldots, n+1\}. $$
For all $j$, let $\lambda_j=e^{i \theta_j}$, $\theta_j \in [0, \pi]$. Note that 
$$
|e^{i\theta} -1| + |e^{i\phi} - 1| = 2\big (|\sin\frac{\theta}{2}| + |\sin\frac{\phi}{2}| \big), $$
this implies 
\begin{eqnarray*} 
\delta(g)&=& 2 \max\{|\sin\frac{\theta_1}{2}| + |\sin\frac{\theta_{j+1}}{2}| : j=1, \dots , n\}\\
&=& \max\{2 (\sin\frac{\theta_1}{2} + \sin\frac{\theta_{j+1}}{2}) : j=1, \dots , n\} \\
&=& \max\{ 4\sin\frac{\theta_1 + \theta_{j+1}}{4} \cos\frac{\theta_1 - \theta_{j+1}}{4} :  j=1, \dots , n\}.\\
\end{eqnarray*}
On the other hand, the expression for Cao-Tan invariant in \cite{ct1} is 
$$\delta_{ct}(g) = \max\left\lbrace 4\sin^2\frac{\theta_1 \pm \theta_{j+1}}{2} : j=1, \dots , n\right\rbrace.$$
 
 Recall that by \cite[Corollary 1.2]{ct1}, under the hypothesis of the above corollary, 
\begin{equation} \label{eq12} 4 \sin^2 \theta \bigg(||h||^2 +2 \bigg)\geq 1.\end{equation} 
Comparing the two sine terms in the LHS of the inequalities \eqnref{eq111} and \eqnref{eq12}, we see that 
$$\sin^2(\theta/2) \leq  \sin^2 \theta, \hbox{ for } \theta \in [0, 2 \pi/3],$$
and our inequality \eqnref{eq111} is stronger than the inequality \eqnref{eq12} of Cao and Tan. But when $\theta \in (2 \pi/3, \pi]$, then $\sin^2(\theta/2) > \sin^2 \theta$, and consequently the inequality of Cao and Tan is better in this subinterval. So, except the last one-third of the interval $[0, \pi]$, our version of the J\o{}rgensen inequality in $\SL(2, \C)$  is better. 

\section{Proof of \thmref{thd} } \label{pfthd}

\begin{proof}
 Given $g$, let $F_g$ denote the subgroup of $\Sp(n,1)$ that stabilizes the set of  fixed points of $g$. The subgroup $F_g$ is closed in $\Sp(n,1)$. 

If possible suppose $G$ is not discrete. Then $G$ is dense in $\Sp(n,1)$, by \thmref{dstu1}. 
Since the set of loxodromic elements $\mathcal L$ forms an open subset of $\Sp(n,1)$,  $\mathcal L \setminus F_g$ is also an open subset in $\Sp(n,1)$. 

\medskip 
(1) Let $g$ be a regular elliptic. We shall use the ball model. Up to conjugacy, we may assume that $q=0$ is a fixed point of $g$, and it is of the form \eqnref{elc}. Since, $G$ is dense in $\Sp(n,1)$, we can get a sequence of loxodromic elements $\{h_m\}$ in $\mathcal L \cap G$ such that $h_m \to I$. For each $m$, the element $h_m gh_m^{-1}$ is also a regular elliptic with fixed point $h_m(q)$. Let 
\begin{equation} \label{eee1} h_m g h_m^{-1} = (a_{i,j}^{(m)}) = {\begin{pmatrix}    a_{1, 1}^{(m)} & \beta^{(m)}\\\alpha^{(m)} & A^{(m)} \end{pmatrix}}.\end{equation} 
Then $h_m g h_m^{-1} \to g$. In particular, $a_{1,1}^{m} \to \lambda_1$, where $|\lambda_1|=1$.  Since $q=0$ is a fixed point of $g$, the left hand side of (\ref{ct}) becomes $|a_{1, 1}^{(m)}| \delta(g)$. The group $\langle g, h_m g h_m^{-1}\rangle$ is clearly discrete. 

 If possible, suppose $\langle g, h_m g h_m^{-1} \rangle$  is elementary. Since loxodromic elements have no fixed point on $\h^n$, $h_m(0) \neq 0$. Thus, $g$ and $h_mg h_m^{-1}$ do not have a common fixed point. Then it must keep two boundary points $p_1$, $p_2$ invariant, and hence will keep invariant the quaternionic line $l$ passing through $p_1$ and $p_2$. Then $g|_l$ acts as a regular elliptic element of $ {\rm Isom}(l) \approx \Sp(1,1)$. Hence, $q$ must belong to $l$, otherwise $g$ would have at least two fixed points contradicting regularity of $g$. Now, note that $g^2|_l$ is also an elliptic element that fixes  $p_1$, $p_2$ and $q$. Then it can be seen that with respect to a chosen basis ${\bf p}_1$ and ${\bf p}_2$, $g^2|_l$ must be of the form $g^2|_l=diag (\lambda, \lambda)$, where $|\lambda|=1$. This implies $g$ has an eigenvalue class represented by $\lambda^{1/2}$ of multiplicity at least $2$. This again contradicts the regularity of $g$. 

So, the group  $\langle g, h_m g h_m^{-1}  \rangle$ must be non-elementary. By \thmref{ct}, 
$$|a_{1, 1}^{(m)}| ~\delta(g) \geq 1. $$
But $|a_{1, 1}^{(m)}| \to 1$ and $\delta(g) <1$.  So, the above is a contradiction. This proves part (1). 

\medskip We shall use the Siegel domain model for proving the other assertions. 

\medskip (2) Let $g$ be loxodromic. Up to conjugacy, let $0$ and $\infty$ are the fixed points of $g$, and so $g$ is of the form \eqnref{elp}.  Since $G$ is dense in $\Sp(n, 1)$,  there exist a sequence $\{h_n\}$ of loxodromic elements in $(\mathcal L \setminus F_g)\cap G$ such that $h_n \rightarrow g$. 
Let
\begin{equation} \label{eqq1} h_n g h_n^{-1} =
\begin{pmatrix}
     a_n & b_n & \gamma_n^*\\
c_n  & d_n & \delta_n^*\\
    \alpha_n &  \beta_n & U_n\\
     \end{pmatrix}.\end{equation} 

 Since, $h_n \in \mathcal L \setminus F_g$,  $g$ and $h_n$ can not have a common fixed point, and neither can have a two point invariant subset. So,  and $\langle g, h_n g h_n^{-1} \rangle$ is non-elementary for each $n$. By \thmref{cpt}, 
    $$|a_nd_n|^{\frac{1}{2}} |b_nc_n|^{\frac{1}{2}} \geq \frac{1 - M_f}{M_f^2}.$$
    But $b_nc_n \rightarrow 0$ as $n \to \infty $, hence 
    $$\frac{1 - M_f}{M_f^2} \leq 0, $$
    which is a contradiction.

\medskip (3) Let $g$ be a Heisenberg translation. Up to conjugacy, let $0$ be the fixed point of $g$ and it is of the form \eqnref{he}. As $g \in \bar{G}$, there exist a sequence of loxodromic elements $\{h_n\} \in (\mathcal L \setminus F_g)\cap G$ such that 
$$ h_n \rightarrow g.$$
 Let $h_n g h_n^{-1}$ be of the form \eqnref{eqq1}. Since  $h_n g h_n^{-1} \to g$, it follows that $t_n \to 0$. 

Since $g$ and $h_n g h_n^{-1}$ have no fixed points in common, $\langle g, h_n g h_n^{-1} \rangle$ is discrete and non-elementary, hence by \thmref{sht}, 
$$Mt_n + 2 |\zeta| > 1.$$
But $t_n \to 0$ as $n \to \infty$. Thus for large $n$, $|\zeta|\geq \frac{1}{2}$. 
This is a contradiction as $|\zeta| < \frac{1}{2}$ is given.  

This proves the theorem.
\end{proof}
\subsection{Proof of \corref{cor1}}
 Note that the set of regular elliptic elements in $\Sp(n,1)$ forms an open subset $\mathcal E$. 

(1) Let $g$ be a regular elliptic. We shall use the ball model. Up to conjugacy, we may assume $g$ is of the form \eqnref{elc}, and thus $g(0)=0$. Since, $G$ is dense in $\Sp(n,1)$, there exists a sequence of regular elliptic elements $\{h_m\}$ in $(\mathcal E \setminus F_g) \cap G$ such that $h_m \to I$. For each $m$, the element $h_m gh_m^{-1}$ is also a regular elliptic with fixed point $h_m(0)$. Let $h_m g h_m^{-1}$ is of the form \eqnref{eee1}. 
The group $\langle g, h_m g h_m^{-1}\rangle$ is clearly discrete. We claim that it is also not elementary. For otherwise, $g$ and $h_m g h_m^{-1}$ must have a common fixed point that will be different from $0$ and $h_m(0)$, which will contradict the regularity of the isometries. Now, by \thmref{ct}, 
$|a_{1, 1}^{(m)}| ~\delta(g) \geq 1$. 
Since $|a_{1, 1}^{(m)}| \to 1$ and $\delta(g) <1$, this is a contradiction. This proves part (1). 

\medskip Using similar arguments as in the proof of  \thmref{thd}, (2) and (3) follow.

\end{document}